\def\le{\leqslant}
\def\ge{\geqslant}
\def\geq{\geqslant}
\def\phi{\varphi}
\def\bar{\overline}
\def\kappa{\varkappa}
\newtheorem{theorem}{\bf \indent Theorem}[section]
\newtheorem{lemma}{\bf \indent Lemma}[section]
\theoremstyle{remark}
\newtheorem{remark}{\bf \indent Remark}[section]
\numberwithin{equation}{section}
\begin{document}

\vskip5mm
\noindent
{\Large \bf
STOCHASTIC ORIGIN FRANK-WOLFE FOR 
\\[3pt]
TRAFFIC ASSIGNMENT
}

\vskip7mm

{\bf Igor Ignashin}$^*$
\vskip-2pt
{\small Moscow Institute of Physics and Technology}
\vskip-4pt
{\small 9, Institutskiy per., Dolgoprudny 141700, Russia}
\vskip-2pt
{\small
ignashin.in@phystech.edu}

\vskip7pt

{\bf Demyan Yarmoshik}
\vskip-2pt
{\small Moscow Institute of Physics and Technology}
\vskip-4pt
{\small 9, Institutskiy per., Dolgoprudny 141700, Russia}
\vskip-2pt
{\small
yarmoshik.dv@phystech.edu}

\vskip7pt

{\bf Andrei Raigorodskii}
\vskip-2pt
{\small Moscow Institute of Physics and Technology}
\vskip-4pt
{\small 9, Institutskiy per., Dolgoprudny 141700, Russia}
\vskip-2pt
{\small
mraigor@yandex.ru}

\vskip10mm

\parbox{146mm}{\noindent\it

In this paper, we present the Stochastic Origin Frank-Wolfe (SOFW) method, 
which is a special case of the block-coordinate Frank-Wolfe algorithm, applied to the problem 
of finding equilibrium flow distributions. By significantly reducing the computational complexity 
of the minimization oracle, the method improves overall efficiency at the cost of increased 
memory consumption. Its key advantage lies in minimizing the number of shortest path computations. 

We refer to existing theoretical convergence guarantees for generalized coordinate Frank-Wolfe methods 
and, in addition, extend the analysis by providing a convergence proof for a batched version of 
the Block-Coordinate Frank-Wolfe algorithm, which was not covered in the original work. 
We also demonstrate the practical effectiveness of our approach through experimental results. 
In particular, our findings show that the proposed method significantly outperforms the classical 
Frank-Wolfe algorithm and its variants on large-scale datasets. On smaller datasets, SOFW also 
remains effective, though the performance gap relative to classical methods becomes less pronounced. 
In such cases, there is a trade-off between solution quality, iteration time complexity, and memory usage.

}

{\centering
\section{Introduction}
}

Predicting flows in urban transportation networks is an essential problem for urban planing and traffic management.
Short-term (seconds to hours) flow prediction is usually approached with machine-learning techniques~\cite{jiang2022graph}. 
For making decisions on mid and long-term timescale (days to decays), engineers apply mathematical models which can roughly be classified into static models, dynamic models and multi-agent simulation models.
Static equilibrium transportation models play a special role due to their simplicity, computational efficiency and the abundant research and civil practice related to them~\cite{blubook-vol1-v091}.
They are still the default choice for multistage models of large networks, where traffic demand is also estimated  within the model, resulting in multiple evaluations of flow distribution in single model run~\cite{kubentayeva2024primal}.

Static equilibrium models are mainly represented by Beckmann's model~\cite{beckmann1956studies} and a less popular model of Nesterov and De-Palma~\cite{nesterov2003stationary}.
The first model is equivalent to a nonlinear convex min-cost multicommodity flow problem (MCF), while the latter is a linear capacitated min-cost MCF.

For both models a lot of attention was attracted to the design of efficient algorithms for searching the equilibria.
One of the effective methods for finding equilibrium in Beckmann's model is the link-based Frank-Wolfe or Frank-Wolfe method \cite{frank1956algorithm,levitin1966constrained}. 
Existing modifications, such as Conjugate Frank-Wolfe~\cite{mitradjieva2013stiff} or N-Conjugate Frank-Wolfe~\cite{ignashin2024modifications}, provide a significant advantage over the basic variant.
More advanced path-based and bush-based algorithms were designed to achieve solutions with higher accuracy, but are associated with such drawbacks as increased memory usage or complicated implementation~\cite{perederieieva2015framework,babazadeh2020reduced}.
Distinct from that are approaches based on solving the dual problem for both Beckmann's and Nesterov--De-Palma's models \cite{gasnikov2016numericalbeckman,kubentayeva2021finding} and the approach based on saddle-point formulation~\cite{yarmoshik2024application,zhang2025solving}.

All algorithmic approaches except the saddle-point approach require finding the shortest path between each origin-destination pair at each iteration. 
This is typically done by calling the Dijkstra's algorithm from each origin node.
Thus the complexity of each iteration grows with the size of network as $O(sn \log n)$, where $n$ is the number of nodes of the road graph and $s$ is the number of origin vertices, what makes it prohibitive to consider highly-detalized models where the number of origins approaches the number of all nodes in the graph.

Note that a similar difficulty in machine learning was bypassed by using stochastic gradient descent method (SGD) which generates randomized step directions using small portions of (large) dataset instead of computing exact gradient over all training samples at each iteration.
The idea of using stochastic dual gradient methods for Beckmann's model was described in~\cite{gasnikov2016numericalbeckman}, but was never validated experimentally. E.g., derivative works \cite{kubentayeva2021finding,kubentayeva2024primal} with numerical results only used deterministic algorithms. 

This paper studies stochastic variants of Frank-Wolfe and universal primal-dual algorithm for finding equilibria in Beckmann's model. 
We propose Stochastic Origin Frank-Wolfe methods (SOFW). This method is a special case of Block-Coordinate Frank-Wolfe algorithm proposed in \cite{jaggi2012blockcoordfw}. The idea behind this method is to find a Frank-Wolfe step on a subset of correspondences. 
The idea of the second method is to find a new direction conjugate to N previous directions.

The key findings of this paper are as follows:
\begin{enumerate}
    \item We propose the SOFW method, a block-coordinate variant of Frank-Wolfe with reduced oracle complexity via sparse flow decompositions.
    \item A weighted variant (SOFW-w) improves initial convergence by prioritizing important updates.
    \item The method introduces a memory–computation trade-off: more memory allows fewer shortest path calls.
    \item Experiments on real-world datasets show that SOFW significantly outperforms Frank-Wolfe on large graphs.
    \item On small graphs, SOFW remains competitive, though the performance gap is smaller.
    \item SOFW-w converges faster initially, but aligns with SOFW over time.
\end{enumerate}


{\centering
\section{Problem statement}
}

In this paper we consider the problem of finding an equilibrium distribution of flows in Beckmann's model, which is equivalent to the following convex optimization problem:
\begin{gather*}
    \Psi(f) = \sum_{e \in \mathcal{E}} \underbrace{\int_{0}^{f_e} \tau_e (z) d z}_{\sigma_e(f_e)} 
    \longrightarrow \min_{f \in \mathcal F} .
\end{gather*}

Let us introduce the notation.
The network is represented by a directed graph $\mathcal G = (\mathcal E, \mathcal V)$, where edges corresponds to roads.

Beckmann's model assume that travel time on a road is a monotonically increasing function of flow on the road.
Usually, Bureau of Public Roads (BPR) congestion function is used \cite{bstabler}:
\begin{gather*}         
         \tau_e(f_e) = \bar{t}_e \left(1 + \rho \left( \frac{f_e}{\bar{f}_e}\right)^{\frac{1}{\mu}} \right) .
         \hspace{1cm} \tau(f) \equiv \{\tau_e(f_e)\}_{e \in \mathcal{E}} ,
\end{gather*}
where $\rho , \mu , \overline{t_e} , \overline{f_e}$ are road parameters in the model.

The model is given a predefined set of source and destinations pairs $w = (i,j)$ and the amount of traffic (correspondence) $d_w$ between each origin-destination pair in the considered period of time. 
\begin{gather*}
    \hspace{1cm} O \text{ and } D -\text{sets of origin and destination vertices } \subset \mathcal V \\
    \hspace{1cm} OD \equiv \{ w = (i,j) ~|~ i \in O , j \in D \} \\
    \hspace{1cm} \mathcal{P} \equiv \{\mathcal{P}_{w}\}_{w\in OD},~\mathcal{P}_{w} \text{ is the set of all paths from $i$ to $j$ for $w = (i,j)$}  
\end{gather*}

The correspondences of each origin-destination pair are additively partitioned into flows along the paths connecting these pairs:
\begin{gather*}
    \hspace{1cm} \mathcal X = \{ x \; | \; d_w = \sum\limits_{p\in \mathcal{P}_w} x_p \; , \; x_p \ge 0 \; \forall w \in OD \}. 
\end{gather*}

Flows on all paths along some road overlap and give rise to some value of the flow on that road.  
The set of feasible edge flows is given by the image of \( \mathcal{X} \) under the linear operator defined by the path-edge incidence matrix \( \Theta \in \{0,1\}^{|\mathcal E| \times |\mathcal{P}|} \):
\[
\mathcal{F} = \left\{ f \in \mathbb{R}^{|\mathcal E|} \,\middle|\, \exists\, x \in \mathcal{X} \text{ such that } f = \Theta x \right\}.
\]

\subsection{Frank-Wolfe}

\begin{algorithm}[H]
    \begin{algorithmic}[1]
        \STATE $f_0 \in \mathcal F$~--- starting point 
        \STATE $k := 0$
        \REPEAT
            \STATE ${s_k^{FW}} := \arg\min\limits_{s \in \mathcal{F}} \langle \nabla \Psi(f_k) , s \rangle$\label{line:linmin}
            \STATE $d_k := {s_k^{FW}} - f_k$\label{line:stepdir}
            \STATE $\gamma_k := \arg\min\limits_{\gamma \in [0,1]} \Psi( f_k + \gamma \cdot d_k )$ \label{line:linesearch} 
            \STATE $f_{k + 1} := f_k + \gamma_k \cdot d_k$ \label{line:step}
            \STATE $k := k + 1$
        \UNTIL{ $k < \text{max iter}$}
    \end{algorithmic}
    \caption{Frank--Wolfe}
    \label{alg::frank-wolfe}
\end{algorithm}

The main loop of the algorithm begins by applying the linear minimization oracle in line~\ref{line:linmin}.
Next, the step direction is obtained in line~\ref{line:stepdir}. 
In lines~\ref{line:linesearch}--\ref{line:step} the stepsize  is searched using linesearch and the step is performed.

Since $\nabla \Psi(f_k) = \tau(f_k)$, the linear minimization oracle minimizes total travel time experienced by all users in each correspondence: 
\begin{gather*}
   \min\limits_{s \in \mathcal F} \langle \nabla \Psi(f^k) , s \rangle = \min\limits_{s \in \mathcal F} \langle \tau(f_k) , s \rangle = \sum\limits_{w \in OD } T_{p_w^*}(f_k) \cdot d_w,
\end{gather*}
where $p_w^*$ is one of the shortests paths between origin $i$ and destination $d$ of the correspondence $w = (i,j) \in OD$,  $T_{p_w^*}(f_k)$ -- travel time along the shortest path $p_w^*$.
Therefore, the solution to this linear subproblem is to distribute all traffic along the shortest paths.
Variable $s \in \mathcal F$ is then efficiently calculated from the output of Dijkstra's or similar one-to-all shortest paths algorithm by summing flows from the shortest paths trees for all origins.

The key observation is that for the Frank-Wolfe step it is necessary to find  shortest paths over for correspondences. On large road graphs this can become computationally expensive. The main interest is in how to improve the method, by making single iteration more computationally efficient.

Note, that the whole algorithm can also be formulated and implemented in the path-based fashion, i.e., in the space of flows on paths $\mathcal X$.
The present link-based form with flows on edges is preferable due to lower memory usage: in the worst case at each iteration a new shortest path will be found, thus the path-based implementation will require to store hundreds of paths for each correspondence.
However, we will use path-based notation in the next section to make the reduction to the block-coordinate Frank-Wolfe algorithm. 
Despite that, in the actual implementation we instead use an origin-based representation to store intermediate results.

\newpage

{\centering
\section{Stochastic Origin Frank-Wolfe}
}

The proposed \textbf{Stochastic Origin Frank-Wolfe} (SOFW) method is a structured variant of the Block-Coordinate Frank-Wolfe algorithm~\cite{jaggi2012blockcoordfw}, adapted to the setting of equilibrium distribution flows in transportation networks.

We consider the path flow vector \( x \in \mathcal{X} \subseteq \mathbb{R}^{|\mathcal{P}|} \) as a concatenation of blocks:
\[
x = \big( x^{(i,j)} \big)_{(i,j) \in \text{OD}}, \quad x^{(i,j)} \in \mathbb{R}^{|\mathcal{P}_{i,j}|},
\]
where each block corresponds to the set of paths \( \mathcal{P}_{i,j} \) between a given origin-destination (OD) pair \((i,j)\). These blocks satisfy the demand constraints:
\[
\sum_{p \in \mathcal{P}_{i,j}} x_p = d_{ij}, \quad x_p \geq 0.
\]

At each iteration, SOFW samples a random subset of OD pairs \( \mathcal{I} \subset \text{OD} \), and updates only the corresponding sub-blocks \( x^{(i,j)} \), while freezing the rest. The optimization is thus restricted to the subspace:
\[
\mathcal{X}_{\mathcal{I}} = \left\{ x \in \mathcal{X} \,\middle|\, x^{(i,j)} = \bar{x}^{(i,j)} \text{ for } (i,j) \notin \mathcal{I} \right\},
\]
and the linear oracle is solved over the set \( F(\mathcal{X}_{\mathcal{I}}) = \{ \Theta x \mid x \in \mathcal{X}_{\mathcal{I}} \} \subset \mathcal F\).

For rational application of shortest-paths algorithms we sample a random subset by sampling a fraction of origin vertices and including all corresponding OD pairs.
This results in a computationally efficient approximation of the classical Frank-Wolfe direction that require only several calls to the Dijkstra's or similar shortest-paths algorithm to be computed, while preserving convergence guarantees established in the block-coordinate framework~\cite{jaggi2012blockcoordfw}.
The complete method is presented in Algorithm~\ref{alg::stoch-frank-wolfe}.

\begin{algorithm}[H]
    \caption{Stochastic Origin Frank-Wolfe}
    \label{alg::stoch-frank-wolfe}
    \begin{algorithmic}[1]
        \STATE Choose initial flow \( f_0 \in \mathcal{F} \), distribution $P((i,j))= w_{i,j}$ 
        \STATE Set iteration counter \( k := 0 \)
        \REPEAT
            \STATE Sample a random subset \( \mathcal{I}_k \subset \text{OD} \)  with probabilities \( \propto w_{ij} \)\hfill \textit{(selected OD-pairs)}
            \STATE Define subspace \( \mathcal{X}_k = \{x \in \mathcal{X} \mid x^{(i,j)} = x_k^{(i,j)} \text{ for all } (i,j) \notin \mathcal{I}_k \} \)
            \STATE $s_k := \arg\min_{s \in F(\mathcal{X}_k)} \langle \nabla \Psi(f_k), s \rangle$
            
            \STATE Compute direction: 
                \[
                d_k := \frac{1}{| \mathcal{I}_k |}W_k \cdot \sum_{(i,j) \in \mathcal{I}_k} \frac{1}{w_{ij}} \left( F(s_k^{[i,j]}) - F(x_k^{[i,j]}) \right), \quad
                W_k := \left( \sum_{(i,j) \in \mathcal{I}_k} \frac{1}{w_{ij}} \right)^{-1}
                \]
            \STATE Choose step size \( \gamma_k \in [0,1] \), e.g., \( \gamma_k = \frac{2}{k+1} \) or via line search
            \STATE Update flow: \( f_{k+1} := f_k + \gamma_k d_k \)
            \STATE Increment \( k := k + 1 \)
        \UNTIL{stopping criterion is satisfied}
    \end{algorithmic}
\end{algorithm}

In the weighted variant, origin-destination (OD) pairs are sampled by first selecting sources with probabilities proportional to their total demand, and then uniformly sampling correspondences associated with the chosen sources. Formally, the probability of sampling a correspondence $(i,j)$ is given by

$$
\mathbb{P}((i,j)) = \frac{d_i}{\sum_{k} d_k} \cdot \frac{1}{|\mathcal{C}_i|},
$$

where $d_i = \sum_{j} d_{ij}$ is the total demand of source $i$, and $|\mathcal{C}_i|$ is the number of correspondences originating from source $i$. This approach ensures that sources with higher total demand are sampled more frequently, while correspondences within each source are equally weighted. An appropriate importance-weighted correction is applied to guarantee an unbiased estimation of the Frank-Wolfe gap.
For simplicity, in our experiments we do not sample individual correspondences directly. Instead, we perform batched sampling over sources: at each iteration, we randomly (with weights $\sim \frac{d_i}{\sum_{k} d_k}$) select a fraction $\alpha$ of all sources. Then, for each selected source $i$, we include all of its associated correspondences $(i,j)$ without additional subsampling. This approach is natural and efficient, as it allows computing shortest paths from a given source to all destinations in a single step, thereby reducing overhead while preserving the sparsity and scalability benefits of the SOFW method.

{\centering
\section{Method convergence}
}

The convergence of the proposed SOFW method follows from the general analysis of the Block-Coordinate Frank-Wolfe algorithm on product domains~\cite{jaggi2012blockcoordfw}. We consider a non-batch setting, where at each iteration the algorithm selects a source uniformly and randomly and selects all blocks corresponding to it. This corresponds to the unweighted variant of SOFW with the sample size equal to the number of destinations, i.e., a single source is selected and all corresponding OD pairs originating from it are updated simultaneously. While the current analysis focuses on this regime, it can be extended to batched updates, where several blocks are sampled and updated simultaneously, though this would require a more general theoretical treatment.

In our setting, the feasible set of path flows is a Cartesian product:
\[
\mathcal{X} = \prod_{(i,j) \in \text{OD}} \mathcal{X}^{(i,j)},
\]
and the set of feasible edge flows is given by the linear mapping \( F : \mathcal{X} \to \mathcal F \), defined by \( F(x) = \Theta x \).

\paragraph{Block curvature constant.}
Let \( \Psi : \mathbb{R}^{|E|} \to \mathbb{R} \) be a convex and differentiable objective (e.g., the Beckmann potential). The curvature of \( \Psi \circ F\) over block \( (i,j) \in \text{OD} \) is defined as:
\[
C^{(i,j)}_\Psi := \sup_{\substack{x \in \mathcal{X},\ s^{(i,j)} \in \mathcal{X}^{(i,j)},\\ \gamma \in [0,1],\ y = x + \gamma (s^{[i,j]} - x^{[i,j]})}} \frac{2}{\gamma^2} \left[ \Psi(F(y)) - \Psi(F(x)) - \left\langle \nabla \Psi(F(x)),\ F(y^{[i,j]}) - F(x^{[i,j]}) \right\rangle \right],
\]
where $s^{[i,j]}, x^{[i.j]} \in \mathcal{X}$ denotes a one-hot vectors with nonzero entries only in the components indexed by $\mathcal{P}_{i,j}$.

The total curvature over the product domain is then given by:
\[
C^{\otimes}_\Psi := \sum_{(i,j) \in \text{OD}} C^{(i,j)}_\Psi.
\]

\paragraph{Primal convergence.}
Consider the variant of SOFW where at each iteration a single correspondence $(i,j) \in \mathrm{OD}$ is sampled uniformly at random and updated. 
Let \( f_k = \mathcal{F}(x_k) \) denote the iterate at step \( k \). 
Then, for this exact variant, the expected suboptimality satisfies~\cite{jaggi2012blockcoordfw}:
\[
\mathbb{E}[\Psi(f_k)] - \Psi(f^*) 
\;\le\; 
\frac{2|\mathrm{OD}|}{k + 2|\mathrm{OD}|} 
\left[ C^{\otimes}_\Psi + \Psi(f_0) - \Psi(f^*) \right].
\]

Therefore, to obtain an \( \varepsilon \)-accurate solution in expectation, it suffices to run
\[
\mathcal{O}\!\left( \frac{|\mathrm{OD}|}{\varepsilon} \right)
\]
iterations.

This result can be extended to the batched setting, where multiple blocks are sampled and updated simultaneously, although the theoretical rate in terms of the iteration count remains of the same order.

\paragraph{Batched primal convergence.}

We consider the batched SOFW method that, at iteration $k$, samples (without replacement) a mini-batch $\mathcal{I}_k \subset \mathrm{OD}$ of size $m$ uniformly at random, solves the usual Frank--Wolfe (Frank-Wolfe) linear subproblem on each sampled block, and moves along the averaged direction.

\begin{align*}
d_k 
&= \frac{1}{m} \sum_{(i,j)\in\mathcal{I}_k} \bigl( s_k^{[i,j]} - x_k^{[i,j]} \bigr),\\
x_{k+1} 
&= x_k + \gamma_k d_k,\qquad \gamma_k \in [0,1].
\end{align*}

Let the full Frank--Wolfe gap decompose across blocks as
\begin{align*}
g(x) 
&:= \sum_{(i,j)\in\mathrm{OD}} g^{(i,j)}(x),\\
g^{(i,j)}(x) 
&:= \Big\langle \nabla \Psi(F(x)),\, F\!\bigl(x^{[i,j]} - s^{[i,j]}(x)\bigr) \Big\rangle,
\end{align*}
where $s^{[i,j]}(x)$ denotes an optimal Frank--Wolfe atom for block $(i,j)$ at $x$ (lifted to the product space).

Recall the block curvature constants $C_{\Psi}^{(i,j)}$ and their sum
\begin{align*}
C_{\Psi}^{\otimes} := \sum_{(i,j)\in\mathrm{OD}} C_{\Psi}^{(i,j)}.
\end{align*}

We first record two auxiliary facts.

\begin{remark}[Single-block progress inequality]
    For any $x\in\mathcal{X}$, block $(i,j)\in\mathrm{OD}$, atom $s^{[i,j]}(x)$, and step $\gamma\in[0,1]$, by the definition of the block curvature constant $C_{\Psi}^{(i,j)}$, we have
    \begin{align*}
    \Psi\!\bigl(F\!\bigl(x + \gamma (s^{[i,j]}(x)-x^{[i,j]})\bigr)\bigr)
    \;\le\;
    \Psi(F(x))
    - \gamma\, g^{(i,j)}(x)
    + \frac{\gamma^2}{2}\, C_{\Psi}^{(i,j)}.
    \end{align*}
\end{remark}

\begin{lemma}[Convex averaging across a mini-batch]
    Let $\{v_r\}_{r=1}^m$ be points in $\mathcal{X}$ and $v = \frac{1}{m}\sum_{r=1}^m v_r$. By convexity of $\Psi\circ F$,
    \begin{align*}
        \Psi(F(v)) \le \frac{1}{m} \sum_{r=1}^m \Psi(F(v_r)).
    \end{align*}
\end{lemma}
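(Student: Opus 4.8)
The plan is to recognize this as a direct instance of Jensen's inequality for the composite map $\Psi \circ F$, so that the real work reduces to verifying two structural facts: that $\Psi \circ F$ is convex on $\mathcal{X}$, and that the average $v$ is itself a legitimate point of $\mathcal{X}$, so that $\Psi(F(v))$ is well defined and the claimed inequality can be compared against the stated convexity bound.

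First I would establish convexity of $\Psi \circ F$. Since $F(x) = \Theta x$ is linear, for any $x, x' \in \mathcal{X}$ and any $\lambda \in [0,1]$ we have the exact identity $F(\lambda x + (1-\lambda) x') = \lambda F(x) + (1-\lambda) F(x')$. Combining this with the assumed convexity of $\Psi$ on $\mathbb{R}^{|\mathcal E|}$ gives
\[
\Psi\bigl(F(\lambda x + (1-\lambda)x')\bigr) \;\le\; \lambda\, \Psi(F(x)) + (1-\lambda)\,\Psi(F(x')),
\]
which is precisely convexity of the composition. Second, I would observe that $\mathcal{X} = \prod_{(i,j)\in\mathrm{OD}} \mathcal{X}^{(i,j)}$ is convex, being a product of demand simplices $\{x^{(i,j)} \ge 0,\ \sum_{p} x_p = d_{ij}\}$, each of which is convex; hence the uniform average $v = \frac{1}{m}\sum_{r=1}^m v_r$ of points $v_r \in \mathcal{X}$ again lies in $\mathcal{X}$.

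With both facts in hand, the conclusion follows by applying the finite Jensen inequality to the convex function $\Psi \circ F$ at the convex combination $v = \sum_{r=1}^m \tfrac{1}{m} v_r$ with equal weights $1/m$ (equivalently, by iterating the two-point convexity inequality above). I do not expect any genuine obstacle: the statement is elementary, and the only points that warrant a word of justification are the \emph{linearity} of $F$ — which is what makes the composition convex rather than merely quasi-convex — and the convexity of the feasible product set $\mathcal{X}$, which guarantees the averaged iterate remains admissible.
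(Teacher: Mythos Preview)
Your proposal is correct and matches the paper's own treatment: the lemma is stated as an immediate consequence of Jensen's inequality for the convex composite $\Psi\circ F$, and the paper offers no further argument beyond that. Your added verifications that $F$ is linear (hence the composition is convex) and that $\mathcal{X}$ is convex (hence $v\in\mathcal{X}$) are appropriate but more explicit than what the paper itself provides.
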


Applying this with $v_r = x_k + \gamma_k(s_k^{[i_r,j_r]} - x_k^{[i_r,j_r]})$ for $(i_r,j_r)\in\mathcal{I}_k$ and $v = x_k + \gamma_k d_k$ gives
\begin{align*}
\Psi(F(x_{k+1})) 
&= \Psi\!\Bigl(F\!\Bigl(x_k + \gamma_k \frac{1}{m}\sum_{(i,j)\in\mathcal{I}_k} (s_k^{[i,j]} - x_k^{[i,j]})\Bigr)\Bigr) \\
&\le \frac{1}{m} \sum_{(i,j)\in\mathcal{I}_k} 
\Psi\!\bigl(F(x_k + \gamma_k (s_k^{[i,j]} - x_k^{[i,j]}))\bigr).
\end{align*}

Invoking the single-block progress inequality term-wise gives, for any realized mini-batch $\mathcal{I}_k$,
\begin{align*}
\Psi(F(x_{k+1}))
&\le
\Psi(F(x_k))
- \gamma_k \frac{1}{m} \sum_{(i,j)\in\mathcal{I}_k} g^{(i,j)}(x_k)
+ \frac{\gamma_k^2}{2} \frac{1}{m} \sum_{(i,j)\in\mathcal{I}_k} C_{\Psi}^{(i,j)}.
\end{align*}

Taking the conditional expectation w.r.t.\ the random mini-batch $\mathcal{I}_k$ (uniform sampling without replacement), we have
\begin{align*}
\mathbb{E}\!\bigl[\Psi(F(x_{k+1})) \,\big|\, x_k\bigr]
&\le
\Psi(F(x_k))
- \gamma_k \frac{1}{m} \,\mathbb{E}\!\Bigl[\sum_{(i,j)\in\mathcal{I}_k} g^{(i,j)}(x_k)\Bigr]
+ \frac{\gamma_k^2}{2} \frac{1}{m} \,\mathbb{E}\!\Bigl[\sum_{(i,j)\in\mathcal{I}_k} C_{\Psi}^{(i,j)}\Bigr] \\
&=
\Psi(F(x_k))
- \gamma_k \frac{1}{m} \cdot m \cdot \frac{1}{|\mathrm{OD}|}\sum_{(i,j)\in\mathrm{OD}} g^{(i,j)}(x_k)
+ \frac{\gamma_k^2}{2} \frac{1}{m} \cdot m \cdot \frac{1}{|\mathrm{OD}|}\sum_{(i,j)\in\mathrm{OD}} C_{\Psi}^{(i,j)} \\
&=
\Psi(F(x_k))
- \gamma_k \frac{1}{|\mathrm{OD}|}\, g(x_k)
+ \frac{\gamma_k^2}{2}\,\frac{1}{|\mathrm{OD}|}\, C_{\Psi}^{\otimes}.
\end{align*}

\medskip
\textbf{Observation:} the expected update recursion is \emph{identical} to the non-batched case. Thus, batching does not change the theoretical convergence rate in terms of iteration count, but reduces the variance and may improve wall-clock time in practice.

\begin{theorem}[Expected primal convergence of batched SOFW]
    \label{thm:batched-SOFW}
    Let $\{x_k\}_{k\ge 0}$ be the iterates of the batched SOFW method defined above with any batch size $m$ and step sizes
    \begin{align*}
        \gamma_k = \frac{2}{k + 2|\mathrm{OD}|}.
    \end{align*}
    Then, for all $k \ge 0$,
    \begin{align*}
        \mathbb{E}\bigl[\Psi(F(x_k))\bigr] - \Psi(F(x^*))
        \;\le\;
        \frac{2|\mathrm{OD}|}{k + 2|\mathrm{OD}|}
        \Bigl[ C_{\Psi}^{\otimes} + \Psi(F(x_0)) - \Psi(F(x^*)) \Bigr].
    \end{align*}
    Consequently, an $\varepsilon$-accurate solution in expectation is obtained after
    \begin{align*}
        \mathcal{O}\!\left( \frac{|\mathrm{OD}|}{\,\varepsilon} \right)
    \end{align*}
    iterations.
\end{theorem}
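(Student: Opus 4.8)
The plan is to reduce the theorem to a single scalar recursion and then close it by induction, following the classical block-coordinate Frank--Wolfe analysis of~\cite{jaggi2012blockcoordfw}. The crucial simplification is the \textbf{Observation} preceding the statement: after taking conditional expectation, the batched iterate obeys exactly the same one-step inequality
\[
\mathbb{E}\!\bigl[\Psi(F(x_{k+1}))\,\big|\,x_k\bigr]
\le
\Psi(F(x_k))
- \frac{\gamma_k}{|\mathrm{OD}|}\, g(x_k)
+ \frac{\gamma_k^2}{2\,|\mathrm{OD}|}\, C_{\Psi}^{\otimes}
\]
as the non-batched method, so the batch size $m$ drops out of the analysis entirely and it suffices to study this recursion.

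First I would bound the Frank--Wolfe gap below by the primal suboptimality. Writing the full atom as $s(x)=\sum_{(i,j)\in\mathrm{OD}} s^{[i,j]}(x)$ and using linearity of $F$, optimality of $s(x)$ over the product domain, and convexity of $\Psi$,
\[
g(x)=\bigl\langle \nabla\Psi(F(x)),\,F(x)-F(s(x))\bigr\rangle
\ge \bigl\langle \nabla\Psi(F(x)),\,F(x)-F(x^*)\bigr\rangle
\ge \Psi(F(x))-\Psi(F(x^*)).
\]
Setting $h_k:=\mathbb{E}[\Psi(F(x_k))]-\Psi(F(x^*))$, taking total expectation, and inserting this gap bound converts the one-step inequality into the deterministic recursion
\[
h_{k+1}\le\Bigl(1-\tfrac{\gamma_k}{|\mathrm{OD}|}\Bigr)h_k+\frac{\gamma_k^2}{2\,|\mathrm{OD}|}\,C_{\Psi}^{\otimes}.
\]

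I would then establish $h_k\le\frac{2|\mathrm{OD}|}{k+2|\mathrm{OD}|}\bigl(C_{\Psi}^{\otimes}+h_0\bigr)$ by induction on $k$. With $N:=|\mathrm{OD}|$, $A:=C_{\Psi}^{\otimes}+h_0$, and $t:=k+2N$, the base case $k=0$ is immediate since the bound becomes $h_0\le A$, which holds because $C_{\Psi}^{\otimes}\ge 0$. For the inductive step I substitute the step size into the recursion, use $C_{\Psi}^{\otimes}\le A$ to combine the two terms, and invoke the elementary inequality $\frac{t-1}{t^2}\le\frac{1}{t+1}$ (equivalently $t^2-1\le t^2$), which is precisely what collapses the estimate to $\frac{2N}{t+1}A$ and advances the induction. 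The $\mathcal{O}(|\mathrm{OD}|/\varepsilon)$ iteration bound then follows by setting the right-hand side equal to $\varepsilon$ and solving for $k$.

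The step I expect to be the main obstacle is the compatibility between the step size and the induction. The per-iteration decrement $\gamma_k/|\mathrm{OD}|$ must match the decay rate of the target envelope $\frac{2|\mathrm{OD}|}{k+2|\mathrm{OD}|}$, which forces $\gamma_k$ to scale \emph{linearly} in $|\mathrm{OD}|$, namely $\gamma_k=\frac{2|\mathrm{OD}|}{k+2|\mathrm{OD}|}$; with this choice $\gamma_k\in[0,1]$ for all $k\ge 0$ and the telescoping above goes through. I would therefore double-check the step size actually used against the one displayed in the statement, since the literal value $\gamma_k=\frac{2}{k+2|\mathrm{OD}|}$ makes the decrement a factor $|\mathrm{OD}|$ too small for the induction to close, and I would likewise verify that the curvature term keeps its $\tfrac{1}{|\mathrm{OD}|}$ prefactor coming from uniform block sampling, as an error in either prefactor would alter the final rate.
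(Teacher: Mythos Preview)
Your approach is identical to the paper's: both start from the conditional one-step inequality derived in the text, bound the Frank--Wolfe gap below by the primal suboptimality, and reduce to the scalar recursion
\[
h_{k+1}\le\Bigl(1-\tfrac{\gamma_k}{|\mathrm{OD}|}\Bigr)h_k+\frac{\gamma_k^2}{2\,|\mathrm{OD}|}\,C_{\Psi}^{\otimes},
\]
after which the paper simply cites the induction from Theorem~C.2 of~\cite{jaggi2012blockcoordfw} while you carry it out explicitly.

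Your closing observation is correct and worth emphasizing. With the step size $\gamma_k=\tfrac{2}{k+2|\mathrm{OD}|}$ as literally stated in the theorem, the induction does \emph{not} close: writing $N=|\mathrm{OD}|$, $t=k+2N$, $A=C_{\Psi}^{\otimes}+h_0$, the inductive step would require
\[
\frac{2N}{t}A-\frac{4}{t^2}A+\frac{2}{Nt^2}C_{\Psi}^{\otimes}\le\frac{2N}{t+1}A,
\]
which fails for $N\ge 2$ and large $t$ because the decrement $\gamma_k/N=\tfrac{2}{Nt}$ is a factor $N$ too small. The step size that makes the argument work is $\gamma_k=\tfrac{2|\mathrm{OD}|}{k+2|\mathrm{OD}|}$, which is exactly the choice in the original block-coordinate Frank--Wolfe analysis of~\cite{jaggi2012blockcoordfw}; with it one gets $h_{k+1}\le 2NA\cdot\tfrac{t-1}{t^2}\le\tfrac{2N}{t+1}A$ via the elementary inequality you quote. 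The paper's proof does not surface this discrepancy because it defers the induction to the cited reference, where the correct step size is used. So your proof is right, and you have in addition spotted what appears to be a typo in the theorem statement.
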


\begin{proof}
    Define the suboptimality gap 
    \(
        \Delta_k := \mathbb{E}[\Psi(F(x_k))] - \Psi(F(x^*)) .
    \) 
    From the above conditional expectation, we have
    \begin{align*}
        \Delta_{k+1} 
        &\le 
        \Delta_k 
        - \gamma_k \frac{1}{|\mathrm{OD}|}\, \mathbb{E}[g(x_k)]
        + \frac{\gamma_k^2}{2}\,\frac{1}{|\mathrm{OD}|}\, C_{\Psi}^{\otimes}.
    \end{align*}
    Since \( \mathbb{E}[g(x_k)] \ge \Delta_k \) (the Frank--Wolfe gap dominates the primal gap in expectation), we obtain
    \begin{align*}
        \Delta_{k+1} 
        &\le 
        \Delta_k \left( 1 - \frac{\gamma_k}{|\mathrm{OD}|} \right)
        + \frac{\gamma_k^2}{2|\mathrm{OD}|} C_{\Psi}^{\otimes}.
    \end{align*}
    Choosing \( \gamma_k = \tfrac{2}{k + 2|\mathrm{OD}|} \), the standard induction argument used in the proof of Theorem C.2 of \cite{jaggi2012blockcoordfw} (with $\nu = 1$) directly applies here and yields
    \begin{align*}
        \Delta_k 
        \le 
        \frac{2|\mathrm{OD}|}{k + 2|\mathrm{OD}|}
        \Bigl[ C_{\Psi}^{\otimes} + \Psi(F(x_0)) - \Psi(F(x^*)) \Bigr].
    \end{align*}
    This concludes the proof.
\end{proof}

\paragraph{Remarks on batching.}
The above analysis extends the standard Block-Coordinate Frank--Wolfe framework \cite{jaggi2012blockcoordfw} 
to the batched setting, which was not explicitly covered in the original paper. 
In our experiments, batching arises naturally because we sample all OD-pairs corresponding 
to one or several sources at each iteration.

It is important to note that batching, as analyzed here, does not improve the theoretical iteration complexity: 
the expected progress per iteration remains the same due to the normalization by the batch size \(m\).
To achieve an improved rate through batching, one would need to adjust certain assumptions or 
redefine curvature constants (e.g., taking advantage of block-wise structure or stronger smoothness). 
Such refinements are beyond the current scope and would require a more careful theoretical treatment.

\paragraph{Remarks on stochastic dual methods for traffic assignment.}
We revealed two difficulties in applying stochastic primal-dual methods in this setup.
First, there is no ready-to-use variant of USTM (which is the only known primal-dual method whose performance can compare with Frank-Wolfe) with inexact function value oracle \cite{kubentayeva2021finding,kubentayeva2024primal}: by default universal methods require to compare values of the objective function in the current and next candidate point \cite{gasnikov2018universal}, what require finding all shortest paths.
Second, the convergence guarantees for primal-dual methods only hold then the primal variable is restored using special averaging formulas, which, in case of stochastic steps, yield non-feasible approximate solutions (i.e. flows do not satisfy the demand constraints), what makes them far less favorable for practical applications.

{\centering
\section{Experiments}
}

We evaluate the performance of the proposed \emph{Stochastic Origin Frank-Wolfe} (SOFW) method against the classical Frank-Wolfe (FW) algorithm, as well as several variants involving different sampling strategies of origin-destination correspondences. In particular, we compare the standard SOFW variant that uniformly samples a subset of correspondences, with varying fractions $\alpha$ of the total number of origin-destination pairs $|OD|$, and the weighted variant (SOFW-w) described in the algorithm section, which samples correspondences proportionally to the weights of their respective sources — where each source weight equals the sum of demands over all correspondences originating from it. For both variants, each correspondence linked to a given source is assigned an equal weight. We also include the classical FW method as a baseline. Experiments are conducted on large-scale transportation datasets from the \texttt{TransportationNetworks} repository: \texttt{Chicago-Regional}, \texttt{GoldCoast}, \texttt{Birmingham-England}, \texttt{Chicago-Sketch}, and \texttt{Philadelphia}. Additionally, we evaluate on smaller benchmark instances: \texttt{SiouxFalls}, \texttt{Anaheim}, \texttt{Terrassa-Asymmetric}, and \texttt{Winnipeg}.

The SOFW algorithm is designed as follows: suppose there are $n$ origin nodes and $m$ destination nodes (i.e., $n \times m$ OD-pairs). At each iteration, we sample a fixed fraction of sources—e.g., $10\%$ of the total origins—and compute shortest paths from these selected sources to all destinations. This reduces the computational cost of shortest-path search by roughly an order of magnitude. 

To implement this scheme, we decompose the total flow into components corresponding to each origin node. At each iteration, only the flows associated with a sampled subset of origins \(\mathcal{A} \subseteq \{1, \dots, n\}\) are updated, while flows corresponding to the remaining origins are fixed at their current values. Formally, the total edge flow \(f\) can be expressed as
\[
    f = \sum_{a=1}^n f^{(a)},
\]
where \(f^{(a)}\) denotes the flow induced by origin \(a\). During the algorithm, we update only the components \(f^{(a)}\) for \(a \in \mathcal{A}\), fixing the others. 

Although maintaining separate flow components increases memory usage by a factor of approximately
\[
    \frac{n}{|\mathcal{A}|},
\]
where \(n\) is the total number of origins and \(|\mathcal{A}|\) is the number of sampled origins per iteration, this approach significantly reduces per-iteration computational cost by limiting shortest path computations to the sampled subset.

Surprisingly, this memory-compute tradeoff leads to substantial practical improvements. Despite increased memory usage, SOFW achieves nearly an order-of-magnitude better Frank-Wolfe gap for the same computational budget compared to the classical FW method.

An important factor behind the growing performance gap between SOFW and the classical FW method on larger datasets lies in the relationship between the stochastic update quality and the computational complexity of shortest-path calculations.

Empirically, the quality of the stochastic update remains relatively stable even when only a small fraction of origins is selected—i.e., the accuracy of the descent direction does not degrade in proportion to the sampling rate. In contrast, the computational complexity of a full Frank-Wolfe step scales as

$$
    \mathcal{O}(n \cdot (E + V \log V)),
$$

where $n$ is the number of origin nodes, $V = |\mathcal{V}|$ is the number of nodes in the network, and $E = |\mathcal{E}|$ is the number of edges. This reflects the cost of solving single-source shortest path problems from all origins.

By sampling only a fraction $\alpha$ of the origins (e.g., $\alpha = 0.1$), SOFW reduces the per-iteration cost to $\mathcal{O}(\alpha \cdot n \cdot (E + V \log V)),$ leading to substantial savings in large-scale settings.

As the number of origins and the network size increase, this computational advantage becomes more pronounced, resulting in faster convergence and better practical performance compared to the classical FW algorithm.

We summarize key experimental findings:
\begin{enumerate}
    \item SOFW significantly outperforms FW on large-scale graphs.
    \item The advantage of SOFW increases with the size of the network.
    \item On small networks, SOFW still performs competitively, although the gap between FW and SOFW becomes less pronounced due to the overhead from flow decomposition.
    \item A clear memory-quality tradeoff is observed: increasing memory usage via flow decomposition yields better convergence rates.
    \item The weighted variant SOFW-w demonstrates faster initial progress compared to standard SOFW, but the performance of both methods converges in the long run.
\end{enumerate}

\vspace{-1cm}
\begin{figure}[H]
    \begin{minipage}{0.5\textwidth}
        \centering
        \includegraphics[width=\linewidth]{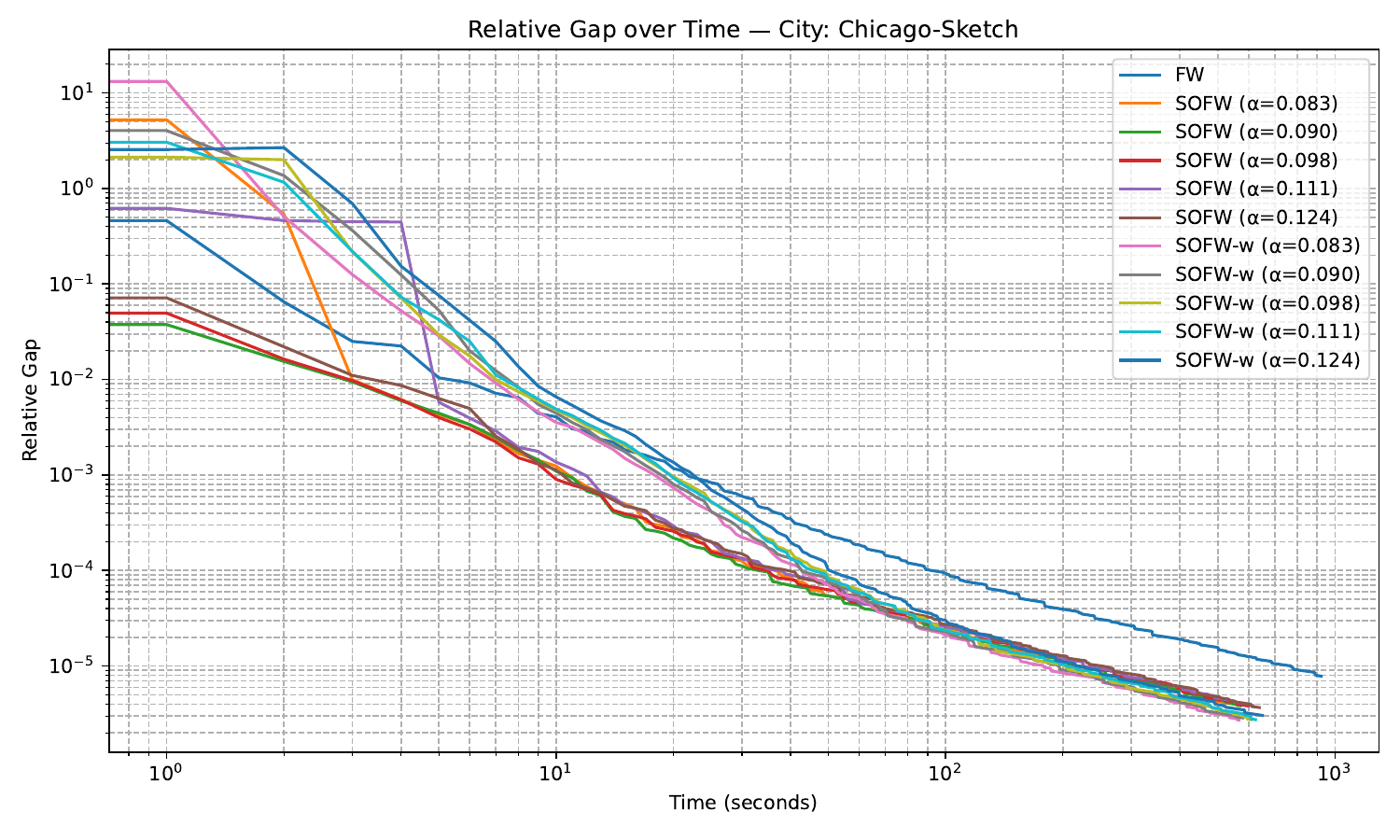}
        \caption{Dataset: Chicago-Sketch}
    \end{minipage}%
    \begin{minipage}{0.5\textwidth}
        \centering
        \includegraphics[width=\linewidth]{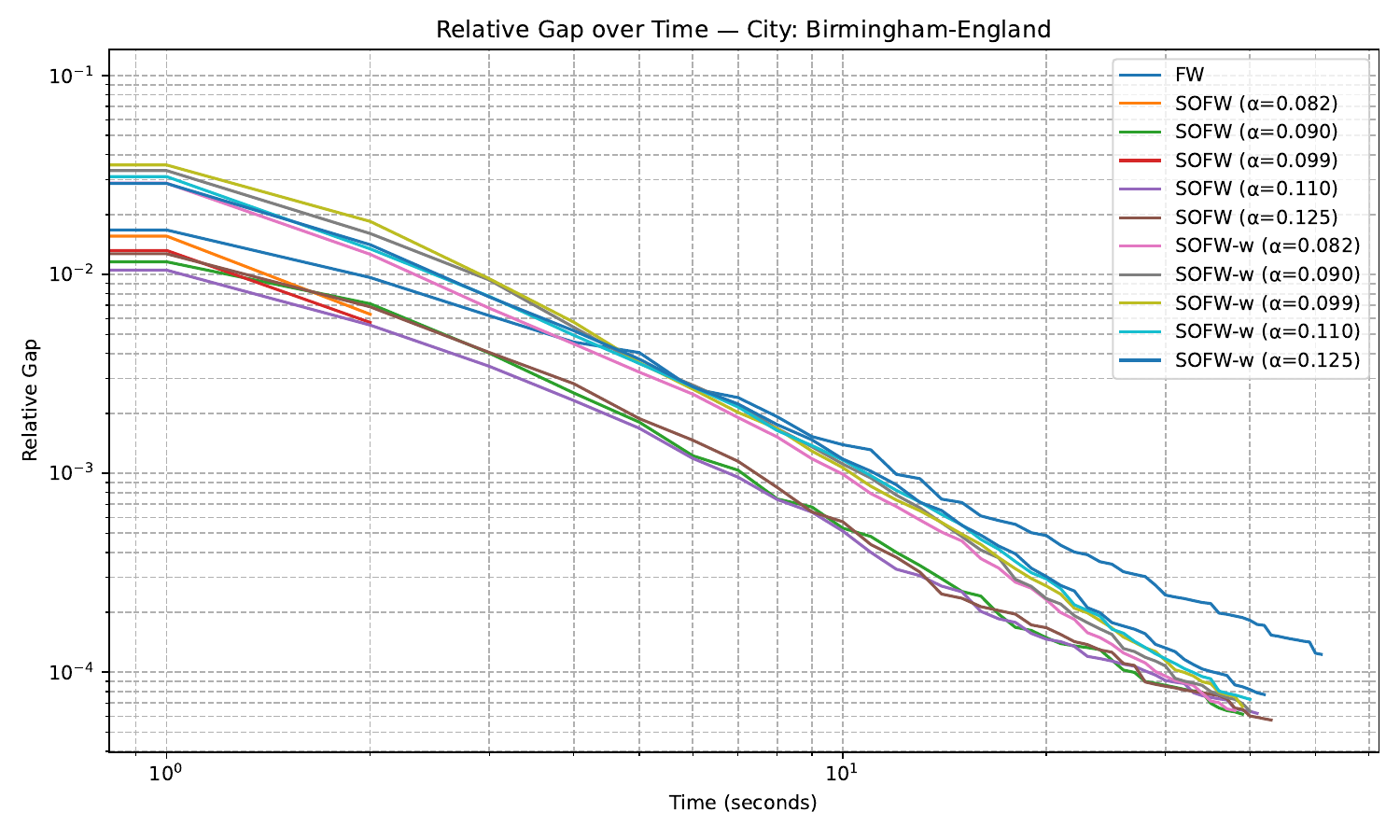}
        \caption{Dataset: Birmingham-England.}
    \end{minipage}
    \caption{Comparison of SOFW and FW on low-scale datasets.}
    \label{fig:SOFW1}
\end{figure}
\vspace{-1cm}
\begin{figure}[H]
    \begin{minipage}{0.5\textwidth}
        \centering
        \includegraphics[width=\linewidth]{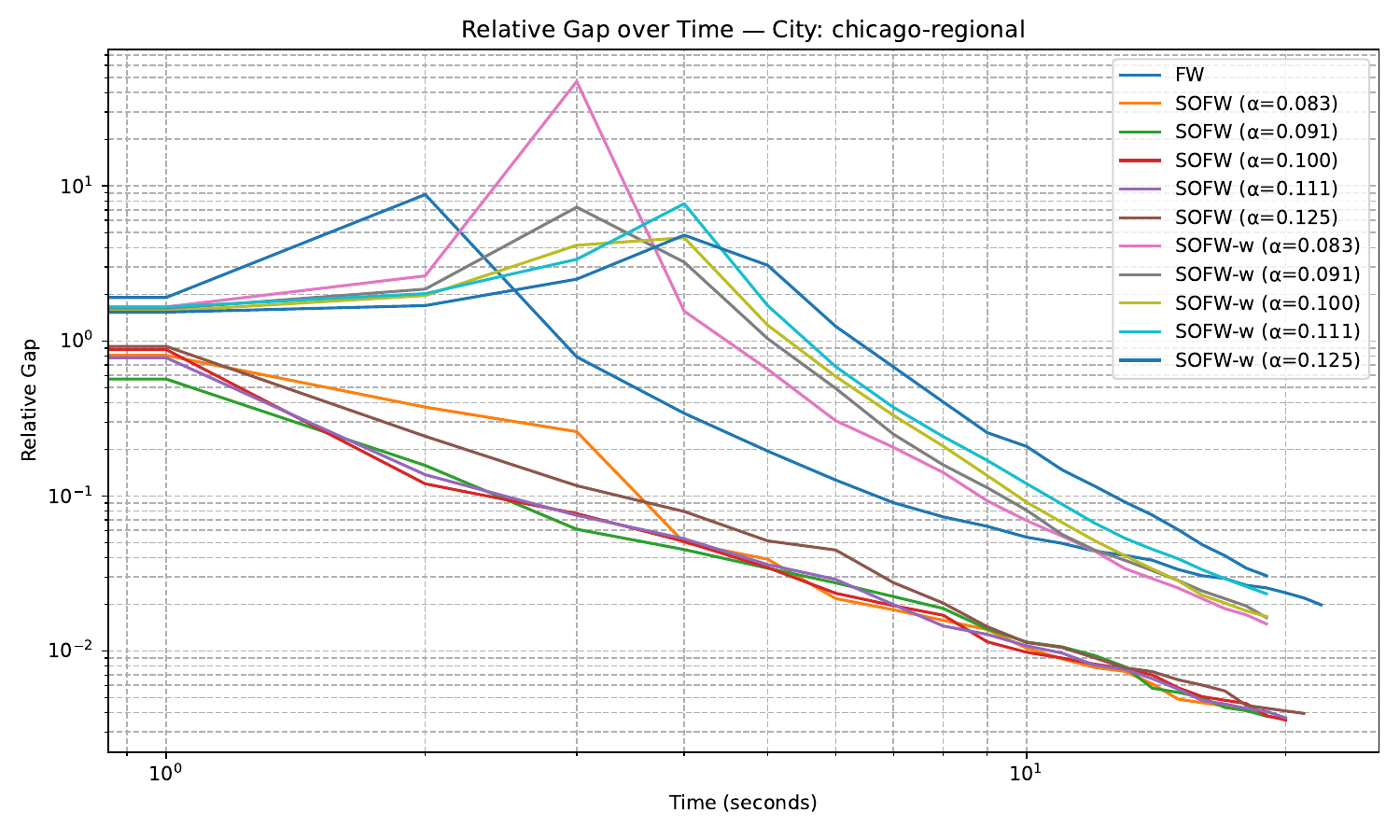}
        \caption{Dataset: Chicago-Regional.}
    \end{minipage}%
    \begin{minipage}{0.5\textwidth}
        \centering
        \includegraphics[width=\linewidth]{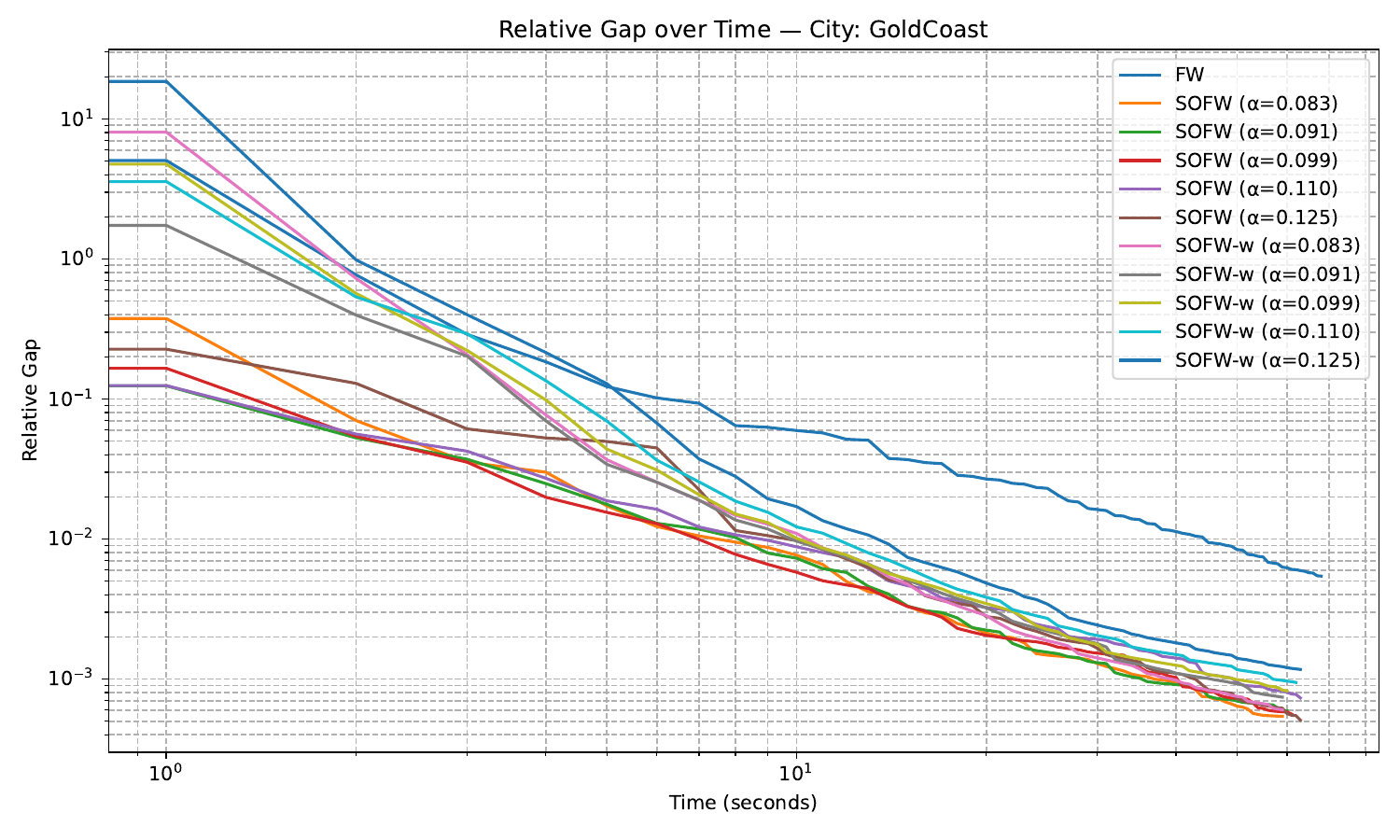}
        \caption{Dataset: GoldCoast}
    \end{minipage}
    \caption{SOFW effectiveness on medium-size transportation networks.}
    \label{fig:SOFW2}
\end{figure}
\vspace{-0.2cm}
\begin{figure}[H]
    \centering
    \includegraphics[width=0.8\textwidth]{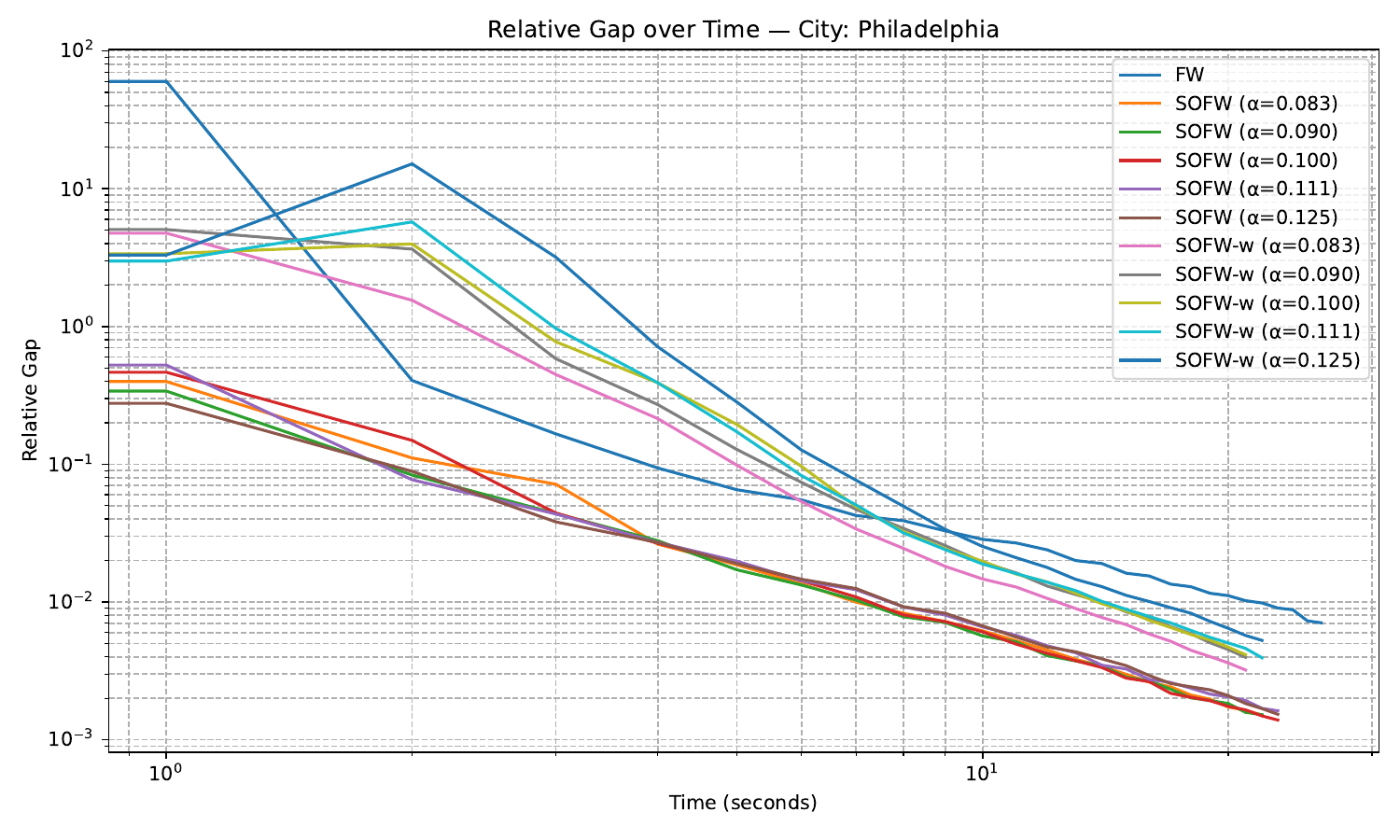}
    \caption{Dataset: Philadelphia. Comparison of SOFW and FW on large-scale dataset.}
    \label{fig:SOFW3}
\end{figure}
\vspace{-1cm}
\begin{figure}[H]
    \begin{minipage}{0.5\textwidth}
        \centering
        \includegraphics[width=\linewidth]{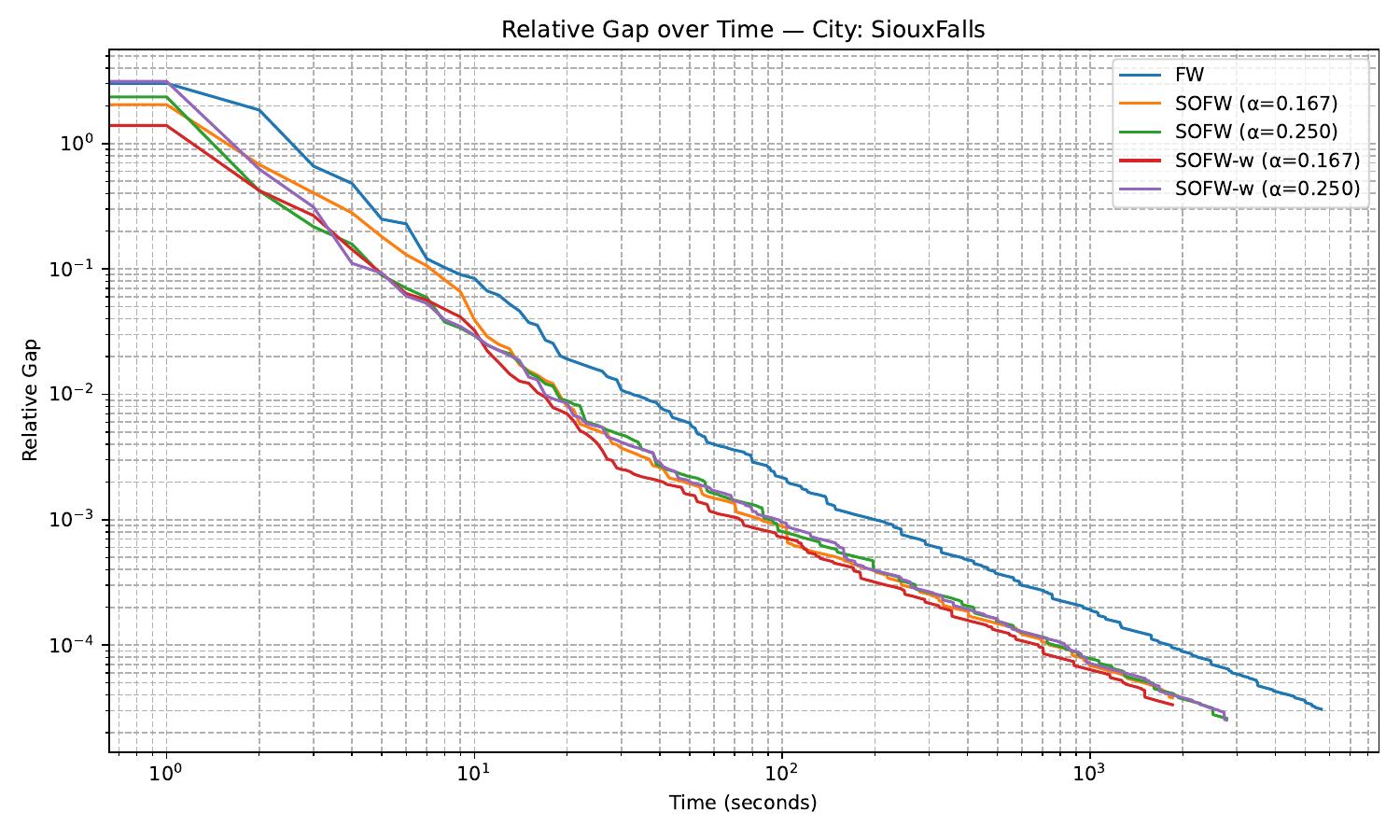}
        \caption{Dataset: SiouxFalls. SOFW vs FW on small network.}
    \end{minipage}%
    \begin{minipage}{0.5\textwidth}
        \centering
        \includegraphics[width=\linewidth]{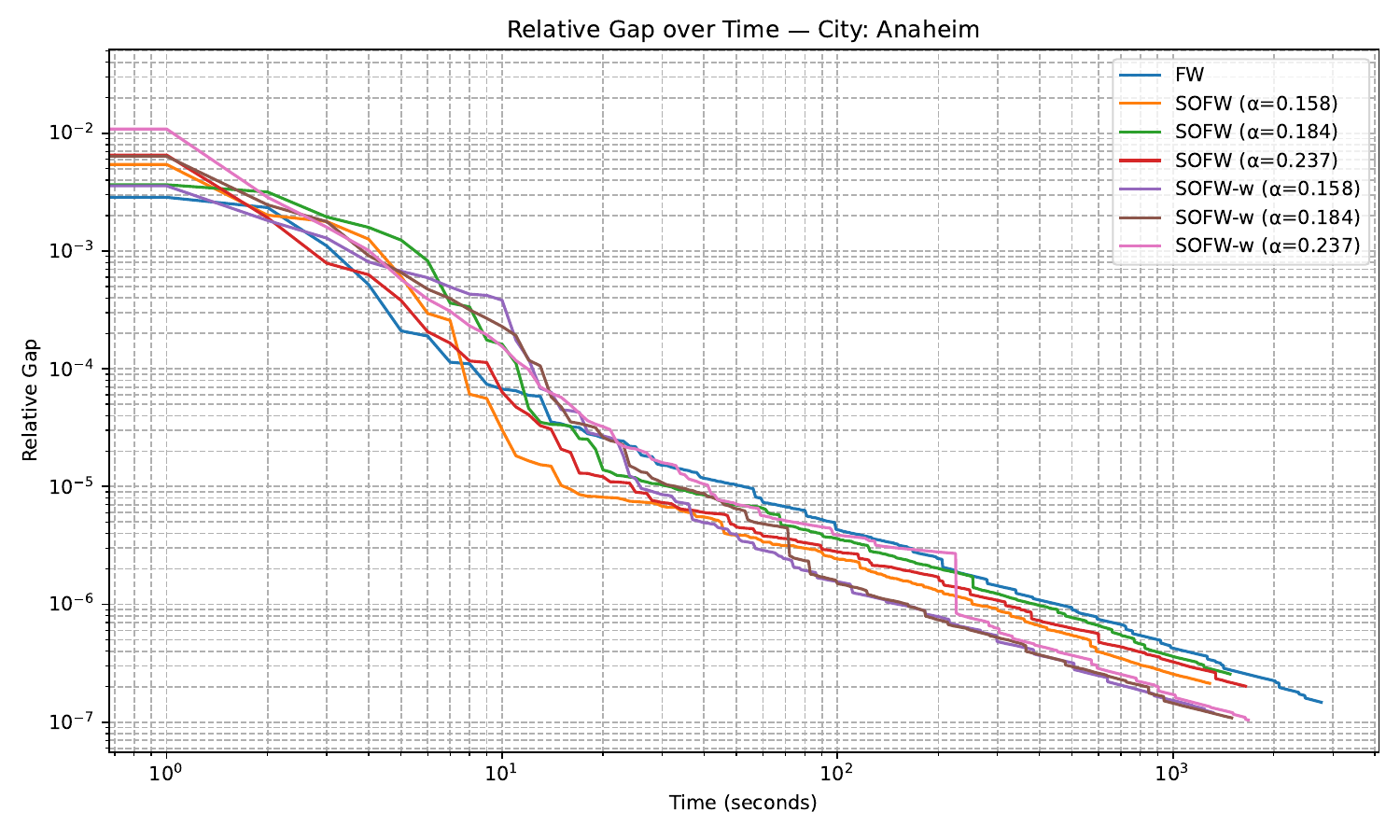}
        \caption{Dataset: Anaheim. SOFW vs FW on small network.}
    \end{minipage}
    \vspace{0.3cm}

    \begin{minipage}{0.5\textwidth}
        \centering
        \includegraphics[width=\linewidth]{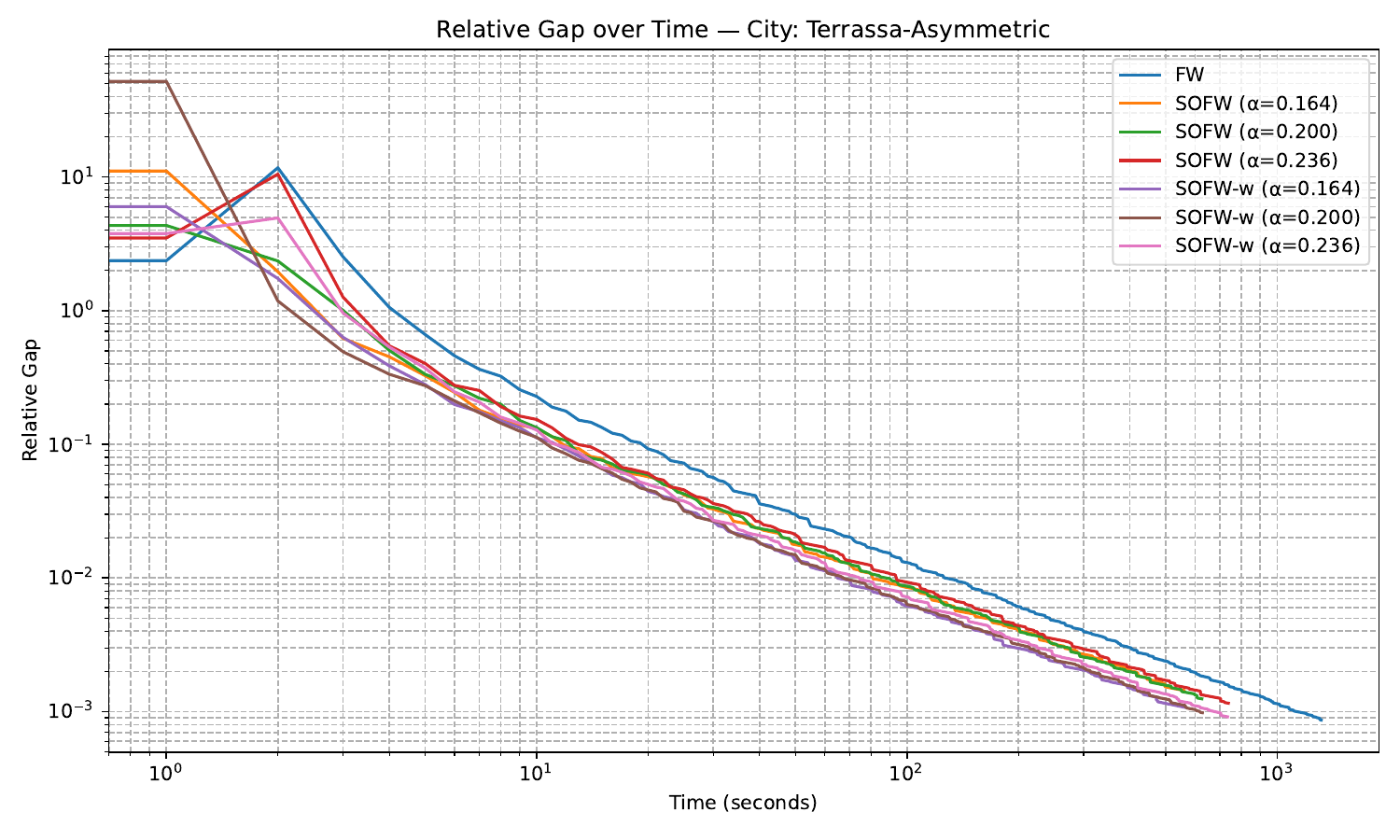}
        \caption{Dataset: Terrassa-Asymmetric. SOFW vs FW on small network.}
    \end{minipage}%
    \begin{minipage}{0.5\textwidth}
        \centering
        \includegraphics[width=\linewidth]{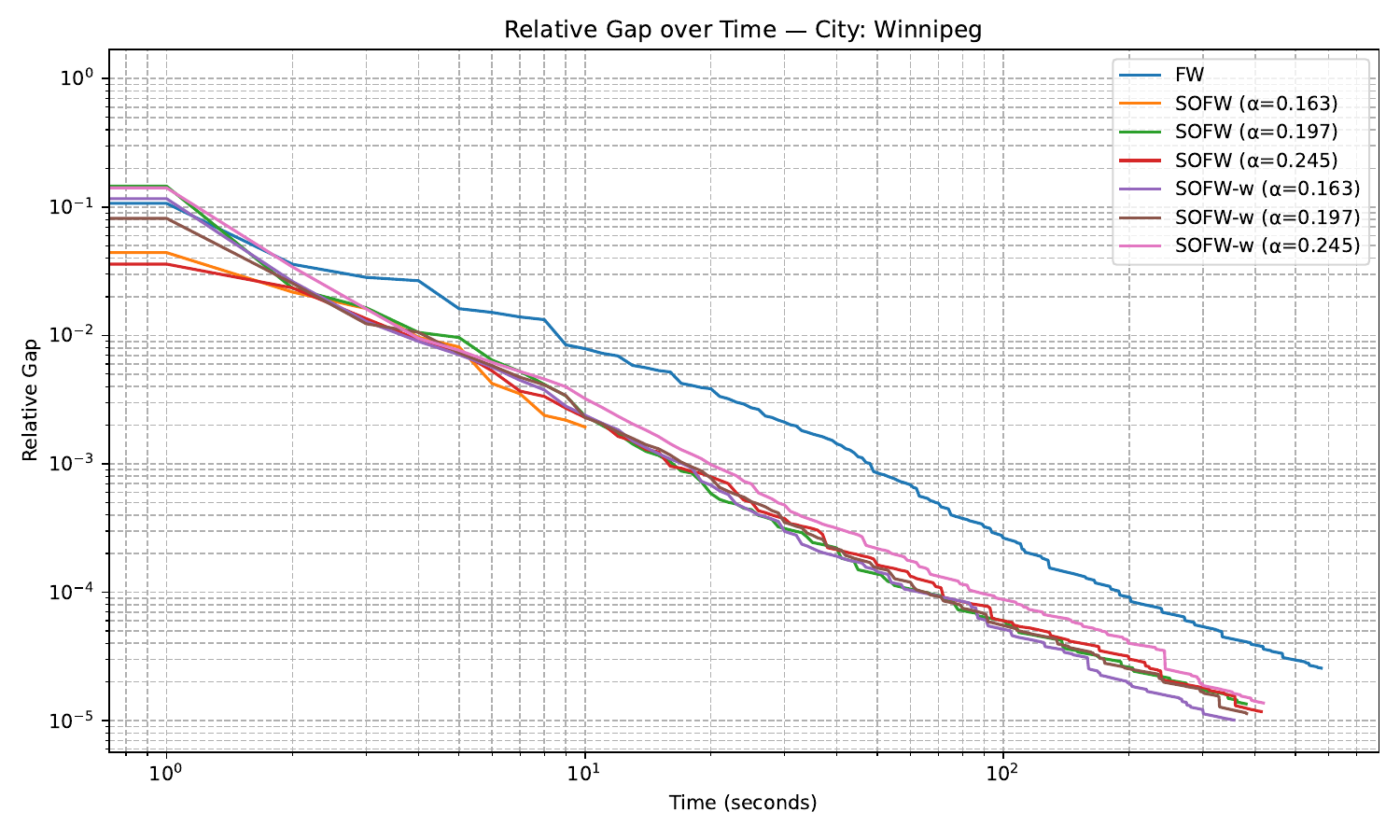}
        \caption{Dataset: Winnipeg. SOFW vs FW on small network.}
    \end{minipage}
    \caption{Comparison of SOFW and FW on small-scale transportation datasets.}
    \label{fig:SOFW-small}
\end{figure}

{\centering
\section{Conclusion}
}

Stochastic approaches demonstrate clear advantages in large-scale transportation modeling, particularly when working with massive datasets typical of large metropolitan areas. In such settings, solving the linear subproblem exactly becomes computationally prohibitive due to the vast number of origin-destination (OD) pairs.

Reducing the problem size by either limiting the number of OD pairs or introducing stochasticity in the selection of origins for shortest path computations provides a practical trade-off between solution quality and computational effort. By sampling a subset of origins at each iteration, it is possible to significantly reduce computational overhead while maintaining satisfactory convergence behavior.

These stochastic schemes enable scalable optimization on mega-scale networks, facilitating efficient and timely traffic assignment and flow estimation in real-world urban systems.

Moreover, it is essential to explore ways of combining stochastic methods with other algorithmic techniques. In particular, an interesting direction for future research is to incorporate stochasticity—such as block-coordinate sampling—into benchmark methods like NFW, potentially yielding more efficient and flexible hybrid algorithms.

In future work, we aim to extend the theoretical analysis to the batched version of the algorithm, which samples and updates multiple sources and their corresponding blocks simultaneously. This extension is expected to reduce the constant factor in the convergence bounds proportionally to the batch size, thereby improving iteration complexity and practical performance.

\noindent

\renewcommand{\refname}{\begin{center}{\Large\bf References} \end{center}}
\makeatletter
\renewcommand{\@biblabel}[1]{#1.\hfill}
\makeatother

\bibliographystyle{amsplain}

\end{document}